\numberwithin{equation}{section}
\theoremstyle{plain}
    \newtheorem{thm}{Theorem}[section]
    \newtheorem{lem}[thm]{Lemma}
    \newtheorem{cor}[thm]{Corollary}
    \newtheorem{conj}[thm]{Conjecture}
\theoremstyle{definition}    
    \newtheorem{defn}[thm]{Definition}
    \newtheorem{rem}[thm]{Remark}
\def\Coker{\mathrm{Coker}}
\def\dR{{\mathrm{d\hspace{-0.2pt}R}}}            
\def\Ext{{\mathrm{Ext}}}
\def\ker{{\mathrm{Ker}}}          
\def\Ker{{\mathrm{Ker}}}          
\def\Pic{{\mathrm{Pic}}}
\def\NS{{\mathrm{NS}}}
\def\pr{{\mathrm{pr}}}
\def\pr{{\mathrm{pr}}}
\def\reg{{\mathrm{reg}}}          %
\def\Res{\mathrm{Res}}
\def\Gr{{\mathrm{Gr}}}
\def\rank{{\mathrm{rank}}}
\def\zar{{\mathrm{zar}}}
\def\bA{{\mathbb A}}
\def\C{{\mathbb C}}
\def\P{{\mathbb P}}
\def\Q{{\mathbb Q}}
\def\Z{{\mathbb Z}}
\def\cH{{\mathscr H}}
\def\cM{{\mathscr M}}
\def\cD{{\mathscr D}}
\def\cF{{\mathscr F}}
\def\cJ{{\mathscr J}}
\def\O{{\mathscr O}}
\def\cU{{\mathscr U}}
\def\vg{\varGamma}
\def\lra{\longrightarrow}
\def\hra{\hookrightarrow}
\def\ot{\otimes}
\def\op{\oplus}
\def\wt#1{\widetilde{#1}}
\def\ol#1{\overline{#1}}
\def\os#1#2{\overset{#1}{#2}}
\def\Aut{\mathrm{Aut}}
\begin{document}
\title{A functional logarithmic formula for hypergeometric functions ${}_3F_2$.}
\author{Masanori Asakura and Noriyuki Otsubo}
\address{Department of Mathematics, Hokkaido University, Sapporo, 060-0810 Japan}
\email{asakura@math.sci.hokudai.ac.jp}
\address{Department of Mathematics and Informatics, Chiba University, Chiba, 263-8522 Japan}
\email{otsubo@math.s.chiba-u.ac.jp}
\date{2016}
\subjclass[2000]{14D07, 19F27, 33C20 (primary), 11G15, 14K22 (secondary)}
\keywords{Periods, Regulators, Hypergeometric functions}

\maketitle

\section{Introduction}\label{intro-sect}
For $\alpha_i,\beta_j\in\C$ with $\beta_j\not\in\Z_{\leq 0}$,
the {\it generalized hypergeometric function} is defined by a power series expansion
\[
{}_pF_{p-1}\left({\alpha_1,\cdots,\alpha_p\atop \beta_1,\cdots,\beta_{p-1}};x\right)
=\sum_{n=0}^\infty\frac{(\alpha_1)_n\cdots(\alpha_p)_n}{(\beta_1)_n\cdots(\beta_{p-1})_n}
\frac{x^n}{n!},
\]
where 
\[
(\alpha)_0:=1,\quad (\alpha)_n:=\alpha(\alpha+1)\cdots(\alpha+n-1)\,\mbox{ for }n\geq1
\]
denotes the Pochhammer symbol.
When $p=2$, this is called the Gauss hypergeometric function.
This has the analytic continuation to $\C$, and then
becomes the multi-valued function which is locally holomorphic on $\C\setminus\{0,1\}$.
A number of formulas are discovered since 19th century 
(e.g. \cite{NIST} Chap. 15,16), and they are applied in lots of areas in mathematics.
In the present,
 the hypergeometric function is one of the most important tool in mathematics.

In \cite{a-o-t}, we discussed the special values of 
${}_3F_2\left({1,1,q\atop a,b};x\right)$ at $x=1$,
and gave a sufficient conditions for that it is a $\ol\Q$-linear combination
of log of algebraic numbers, namely
\[
{}_3F_2\left({1,1,q\atop a,b};1\right)
\in
\ol{\Q}+\ol{\Q}\log\ol{\Q}^\times\\
:=\left\{a+\sum_{i=1}^nb_i\log c_i\mid a,b_i,c_i\in\ol{\Q},\,
c_i\ne0,\,n\in\Z_{\geq0}\right\}.
\]
The goal of this paper is to give its functional version.
To be precise, set
\[
\ol{\Q(x)}+\ol{\Q(x)}\log\ol{\Q(x)}^\times\\
:=\left\{f+\sum_{i=1}^ng_i\log h_i\mid f,g_i,h_i\in\ol{\Q(x)},\,
h_i\ne0,\,n\in \Z_{\geq0}\right\}
\]
where $\ol{\Q(x)}$ denotes the algebraic closure of the field of rational functions $\Q(x)$.
We say the {\it logarithmic formula} holds for a function $f(x)$ if 
it belongs to the above set.
The main theorem
is to give a sufficient condition on $(a,b,q)$ for that the log formula holds
for the hypergeometric function ${}_3F_2\left({1,1,q\atop a,b};x\right)$.
Recall that two proofs are presented in \cite{a-o-t}, one uses
hypergeometric fibrations and the other uses Fermat surfaces.
In this paper we follow the method of hypergeometric fibrations, while
a new ingredient is employed from \cite{a-o-3}.
It seems impossible to prove the functional log formula according to
the method of Fermat.

\medskip

By developing the technique here, we can get {\it explicit} log formula in some cases.
For example, let
\[
e_1(x):=\frac{1}{2}
+x^{-\frac{1}{3}}
\left(-\frac{1}{4}+\frac{x}{8}+\frac{1}{4}\sqrt{1-x}\right)^{\frac{1}{3}}
+x^{-\frac{1}{3}}
\left(-\frac{1}{4}+\frac{x}{8}-\frac{1}{4}\sqrt{1-x}\right)^{\frac{1}{3}}
\]
\[
e_2(x):=\frac{1}{2}
+e^{-2\pi i/3}x^{-\frac{1}{3}}
\left(-\frac{1}{4}+\frac{x}{8}+\frac{1}{4}\sqrt{1-x}\right)^{\frac{1}{3}}
+e^{2\pi i/3}x^{-\frac{1}{3}}
\left(-\frac{1}{4}+\frac{x}{8}-\frac{1}{4}\sqrt{1-x}\right)^{\frac{1}{3}}
\]
\[
e_3(x):=\frac{1}{2}
+e^{2\pi i/3}x^{-\frac{1}{3}}
\left(-\frac{1}{4}+\frac{x}{8}+\frac{1}{4}\sqrt{1-x}\right)^{\frac{1}{3}}
+e^{-2\pi i/3}x^{-\frac{1}{3}}
\left(-\frac{1}{4}+\frac{x}{8}-\frac{1}{4}\sqrt{1-x}\right)^{\frac{1}{3}}
\]
\[
p_\pm=p_\pm(x):=\left(\frac{1\pm\sqrt{1-x}}{\sqrt{x}}\right)^{\frac{2}{3}},\quad 
q_j=q_j(x):=\frac{1-\sqrt{3x}\cdot e_j(x)}{1+\sqrt{3x}\cdot e_j(x)}.
\]
Then
\[
{}_3F_2\left({1,1,\frac{1}{2}\atop
\frac{7}{6},\frac{11}{6}};x\right)=
\frac{5\sqrt{3}}{36}x^{-\frac{1}{2}}
\left[
(p_++p_-)\log \left(\frac{q_1}{q_2}\right)+(e^{\frac{\pi i}{3}}p_++e^{-\frac{\pi i}{3}}p_-)
\log \left(\frac{q_2}{q_3}\right)\right].
\]
However there remains technical difficulties arising from algebraic cycles
to obtain explicit log formula in more general cases.

\section{Main Theorem}\label{main-sect}
Let 
$\hat{\Z}=\varprojlim_n\Z/n\Z$ be the completion, and
$\hat{\Z}^\times=\varprojlim_n(\Z/n\Z)^\times$ the group of units.
The ring $\hat{\Z}$ acts on the additive group $\Q/\Z$ in a natural way, and
then it induces $\hat{\Z}^\times\cong \Aut(\Q/\Z)$.
We denote by $\{x\}:=x-\lfloor x\rfloor$ the fractional part of $x\in\Q$.
The map $\{-\}:\Q\to[0,1)$ factors through $\Q/\Z$, which we denote by
the same notation.

\begin{thm}[Logarithmic Formula]\label{log}
Let $q,a,b\in \Q$ satisfy that none of $q,a,b,q-a,q-b,q-a-b$ is an integer.
Suppose
\begin{equation}\label{log-eq1}
1=\{sa\}+\{sb\}
+2\{-sq\}-\{s(a-q)\}-\{s(b-q)\}
\end{equation}
\[
(\Longleftrightarrow\quad
\min(\{sa\},\{sb\})<\{sq\}<\max(\{sa\},\{sb\}))
\]
for $\forall s\in \hat{\Z}^\times$.
Then
\[
{}_3F_2\left({n_1,n_2,q\atop a,~b};x\right)
\in\ol{\Q(x)}+\ol{\Q(x)}\log\ol{\Q(x)}^\times
\]
for any integers $n_i>0$.
\end{thm}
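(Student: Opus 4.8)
The plan is to reduce the general statement to the base case $n_1=n_2=1$ by contiguity, and then to realize ${}_3F_2\left({1,1,q\atop a,b};x\right)$ as a regulator of a canonical $K$-theory class on a hypergeometric fibration, following the strategy of \cite{a-o-t} and feeding in the Hodge-theoretic input of \cite{a-o-3}. Concretely, write $V:=\ol{\Q(x)}+\ol{\Q(x)}\log\ol{\Q(x)}^\times$ and let $\vartheta:=x\,\frac{d}{dx}$. First I would record that $V$ is a $\ol{\Q(x)}$-submodule of the multivalued functions on $\mathbb{A}^1\setminus\{0,1\}$ that is stable under $\vartheta$: for $g,h\in\ol{\Q(x)}$ one has $\vartheta(g\log h)=(\vartheta g)\log h+g\,\frac{\vartheta h}{h}$, and $\vartheta g,\ \vartheta h/h\in\ol{\Q(x)}$. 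On the other hand the contiguity relation $(\vartheta+\alpha)\,{}_3F_2\left({\alpha,\dots\atop\dots};x\right)=\alpha\,{}_3F_2\left({\alpha+1,\dots\atop\dots};x\right)$ for an upper parameter $\alpha$ shows that ${}_3F_2\left({n_1,n_2,q\atop a,b};x\right)$ is obtained from the case $n_1=n_2=1$ by applying a composite of operators $\vartheta+k$ with $k\in\Z_{>0}$. Since $V$ is $\vartheta$-stable, it suffices to prove ${}_3F_2\left({1,1,q\atop a,b};x\right)\in V$.

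Second, following \cite{a-o-t}, I would give the geometric interpretation of this base case. Attached to $(a,b,q)$ there is a hypergeometric fibration $f\colon X\to U$ by curves over an open $U\subset\mathbb{A}^1_x$, defined over $\ol\Q$, whose relative cohomology carries a variation of Hodge structure $\mathcal H$ whose Picard–Fuchs operator is a Gauss hypergeometric operator with parameters built from the fractional parts of $a,b,q$. There is a canonical class $\xi$ in $K_2$ of the generic fibre, equivalently in the motivic cohomology $H^2_{\mathcal M}(X,\Q(2))$ of $X$ taken over $\Q(x)$, built from explicit units on the fibres. The key period computation then identifies ${}_3F_2\left({1,1,q\atop a,b};x\right)$, up to an explicit prefactor in $\ol{\Q(x)}$, with the pairing of the Beilinson regulator $\reg(\xi)$ against a de Rham class $\omega\in H^1_{\dR}$ defined over $\ol{\Q(x)}$; the two upper parameters $1$ reflect that one computes a regulator of a higher cycle rather than a plain period of $\mathcal H$, the extra logarithmic integration being exactly the source of the additional $(1)_n$.

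Third — and this is where hypothesis \eqref{log-eq1} and the ingredient of \cite{a-o-3} enter — I would show that, Galois-orbit by Galois-orbit, the rank-$2$ sub-quotient of $\mathcal H$ supporting $\reg(\xi)$ is an extension of Tate type. The Hodge decomposition of the hypergeometric piece indexed by $s\in\hat\Z^\times$ is governed by the interlacing of $\{sa\},\{sb\},\{sq\}$, and the equivalent form $\min(\{sa\},\{sb\})<\{sq\}<\max(\{sa\},\{sb\})$ of \eqref{log-eq1} says precisely that for every $s$ this piece has the Hodge–Tate shape that makes $\Ext^1_{\MHS}$ collapse onto its Kummer part $\cong\C^\times$. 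Consequently $\reg(\xi)$ is a Kummer (logarithmic) normal function: its periods take the form $\sum_i g_i\log h_i$ with $g_i,h_i\in\ol{\Q(x)}$, while the Hodge-filtration contribution supplies the $\ol{\Q(x)}$-summand of $V$. Pairing with $\omega$ then places ${}_3F_2\left({1,1,q\atop a,b};x\right)$ in $V$, which finishes the base case and hence the theorem.

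The main obstacle is exactly this third step: translating the numerical interlacing condition \eqref{log-eq1} into the geometric statement that the non-logarithmic part of the extension is trivialized, and doing so over $\ol{\Q(x)}$ rather than merely over $\C$. One must exhibit the algebraic cycles, equivalently the explicit multiplicative functions $h_i$ on $X$, that split off the Tate quotients, and then control their field of definition; generically this forces a passage to a finite cover of $\Q(x)$ (as already visible in the cubic $e_j(x)$ of the introductory example), which is what makes the $g_i,h_i$ algebraic rather than rational. This is the ``technical difficulties arising from algebraic cycles'' flagged in the introduction. By contrast, the reduction of the first step and the period computation of the second are expected to be routine given the constructions of \cite{a-o-t}.
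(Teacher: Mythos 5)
Your high-level architecture (contiguity reduction; geometric realization via a hypergeometric fibration; ``interlacing $\Rightarrow$ Tate $\Rightarrow$ logarithmic regulator'') does match the paper's, and your first step is essentially the paper's reduction (the paper additionally shifts $q,a,b$ by integers, which is needed later to match the particular parameter representatives the geometry produces). But there are two genuine gaps, and they sit exactly where the paper does its real work. The first is the missing inversion step. In the paper the motivic classes are not a $K_2$ class of the generic fibre but $K_1$-type classes in $H^3_{\cM}(X_{\wt S},\Q(2))$ supported on singular fibres of an auxiliary fibration over the $\lambda$-line, and the regulator is the extension class $\rho$ of the VMHdR extension \eqref{HG-sect1-eq5}. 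The regulator formula (Theorem \ref{a-o-log}, imported from \cite{a-o-3}) does \emph{not} express a single ${}_3F_2$ as a regulator up to an algebraic prefactor: it expresses the two components $\phi_1,\phi_2$ of $\rho({}^t\chi)$, modulo $\ol{\Q(\lambda)}$, as combinations $C(1-\lambda)^r\bigl[E_1^{(r)}\cF_1+E_2^{(r)}\cF_2\bigr]$ of \emph{two} hypergeometric functions with coefficients $E_i^{(r)}\in\Q(\lambda)$. Hence knowing $\phi_1,\phi_2\in\ol{\Q(\lambda)}+\ol{\Q(\lambda)}\log\ol{\Q(\lambda)}^\times$ yields only two linear relations, and to conclude that each $\cF_i$ separately has log form one must prove the nonvanishing \eqref{reg-log-3} of the $2\times 2$ determinant of coefficients for almost all $r$; the paper does this by a matrix recursion reducing it to the explicit value $\lambda\bigl((a-1)(b-1)\lambda+s(a+b-2)\bigr)/\bigl((s+a-1)(s+b-1)\bigr)$, nonzero because $\beta_i^\chi\notin\Z$ and $k/l-\beta_i^\chi\notin\Z$. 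Your phrase ``pairing with $\omega$ then places ${}_3F_2$ in $V$'' presupposes a one-term identity that the regulator computation does not provide, and no step of your proposal performs this inversion.

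The second gap is that what you defer as the ``main obstacle'' --- exhibiting cycles and units over $\ol{\Q(x)}$, i.e.\ upgrading ``Kummer over $\C$'' to algebraicity of the $g_i,h_i$ --- is not an aside but the heart of the theorem, and the paper closes it. Once Corollary \ref{HI} gives that $W_2\cH(e)$ is Tate of type $(1,1)$ (this, too, is not formal: it rests on Theorem \ref{HI-prop}, whose proof needs the determinant formula of \cite{asakura-fresan} together with Fedorov's theorem \cite{fedorov} to exclude the Hodge numbers $(1,0,1)$), the Lefschetz $(1,1)$ theorem shows $W_2\cH(e)$ is generated by divisor classes of the geometric generic fibre $X_{\ol\eta}$; via the compatibility diagram \eqref{HG-sect1-eq4-b1}, the image of the regulator is then generated by the images of $H^1_\cM(\wt{D}_i,\Q(1))\cong\ol\eta^\times\ot\Q$, on which the Beilinson regulator is literally the logarithm of elements of $\ol{\Q(\lambda)}^\times$; this is how \eqref{reg-log-2} is proved. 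Without this motivic argument, an extension of Tate variations over $S^{an}$ a priori only produces a holomorphic invertible function, so your ``Kummer normal function'' step does not by itself yield coefficients in $\ol{\Q(x)}$. In short, your proposal assumes, rather than proves, both of the paper's two key steps.
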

As we shall see in \S \ref{proof-sect},  one can shift the indices $n_i$, $q,a,b$ by arbitrary
integers by applying differential operators.
Thus it is enough to prove the log formula for ${}_3F_2\left({1,1,q\atop a,~b};x\right)$.

\medskip

Recall the main theorem of \cite{a-o-t} which asserts that
if
\begin{equation}\label{log-eq2}
2=\{sq\}+\{s(a-q)\}+\{s(b-q)\}+\{s(q-a-b)\}
\end{equation}
for $\forall s\in \hat{\Z}^\times$, then
\[
{}_3F_2\left({1,1,q\atop a,~b};1\right)
\in\ol{\Q}+\ol{\Q}\log\ol{\Q}^\times
\]
as long as it converges ($\Leftrightarrow$ $a+b>q+2$).
It is easy to see \eqref{log-eq1} $\Rightarrow$ \eqref{log-eq2}
while the converse is no longer true (e.g. $(a,b,q)=(1/6,1/4,1/2)$).
Theorem \ref{log} does not cover all of the main theorem of \cite{a-o-t}.
\begin{conj}[cf. \cite{a-o-t} Conjecture 5.2]
The converse of Theorem \ref{log} is true.
\end{conj}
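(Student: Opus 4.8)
The plan is to deduce the combinatorial condition \eqref{log-eq1} from the logarithmic formula by reinterpreting that formula as a Hodge-theoretic constraint, and then to show that the constraint is controlled \emph{exactly}, not merely implied, by the fractional-part identity. Let $N$ be a common denominator of $a,b,q$, so that the hypergeometric fibration used in the forward proof carries a $\mu_N$-action. As in that proof, ${}_3F_2\left({1,1,q\atop a,~b};x\right)$ is, up to an explicit factor in $\ol{\Q(x)}$, a period of a mixed Hodge structure $M=M(a,b,q)$ over $\Spec\ol{\Q(x)}$ sitting in an extension
\[
0\lra V\lra M\lra \Q(0)\lra 0,
\]
where $V$ is pure of weight two (after a suitable Tate twist) and carries an action of $\Q(\zeta_N)$. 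First I would record the eigenspace decomposition $V\ot\ol\Q=\bigoplus_s V_s$ indexed by $s\in(\Z/N)^\times\subset\hat{\Z}^\times$, with each $V_s$ of rank one and of Hodge type $(p_s,2-p_s)$; the genericity hypotheses that none of $q,a,b,q-a,q-b,q-a-b$ is an integer guarantee that this decomposition is defined and that no $V_s$ degenerates. The key dictionary, already used in one direction by the forward proof, is that $p_s=1$ — that is, $V_s$ is of Tate type $(1,1)$ — if and only if \eqref{log-eq1} holds for $s$, whereas its failure forces $p_s\in\{0,2\}$, so that $V_s$ is genuinely non-Tate.

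The heart of the converse is the dichotomy that the logarithmic formula holds for ${}_3F_2$ if and only if every $V_s$ is of Tate type. The ``if'' half is the content of Theorem \ref{log}; for the ``only if'' half I would argue by contraposition. If \eqref{log-eq1} fails for some $s$, then $\Gr V_s$ is a weight-two Hodge structure of type $(2,0)+(0,2)$ with complex multiplication, and the corresponding component of the period of $M$ is then a genuine period of a non-Tate motive. One must show that such a period cannot be absorbed into $\ol{\Q(x)}+\ol{\Q(x)}\log\ol{\Q(x)}^\times$. I would reduce this to a statement about the differential module generated by the function: any element of $\ol{\Q(x)}+\ol{\Q(x)}\log\ol{\Q(x)}^\times$ generates, together with its derivatives, a differential module that is an iterated extension of rank-one pieces with \emph{unipotent} monodromy, so every irreducible subquotient has rank one; by contrast, a non-Tate $V_s$ contributes to the hypergeometric operator annihilating ${}_3F_2$ an irreducible subquotient of rank $\ge2$ whose local monodromy at $0$ or $\infty$ has distinct eigenvalues. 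Matching these eigenvalues to the exponents $\{sa\},\{sb\},\{sq\}$ — via the Beukers--Heckman description of hypergeometric monodromy together with the Hodge--de Rham comparison — would show that this irreducible rank-$\ge2$ piece is present precisely when \eqref{log-eq1} fails, giving the required contradiction.

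The step I expect to be the genuine obstacle is exactly this last transcendence input: ruling out \emph{every} representation $f+\sum_i g_i\log h_i$ with $f,g_i,h_i\in\ol{\Q(x)}$ amounts to precise control of the differential Galois group of the hypergeometric operator, namely that it is ``large'' (admitting an irreducible quotient of dimension $\ge2$, e.g. containing $\mathrm{SL}_2$) in all cases where \eqref{log-eq1} fails, uniformly in the parameters. A clean sufficient statement would be a theorem of Kolchin type asserting that a function whose minimal differential module has an irreducible subquotient of rank $\ge2$ lies outside $\ol{\Q(x)}+\ol{\Q(x)}\log\ol{\Q(x)}^\times$; combined with the eigenvalue computation this would settle the conjecture unconditionally. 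Absent such a theorem, the same dichotomy can be phrased Hodge-theoretically, where the ``only if'' half becomes a case of the Beilinson--Hodge conjecture for the hypergeometric fibration: the logarithmic formula would force the relevant normal function to be of Tate type, hence \eqref{log-eq1}. Either route isolates the difficulty in the non-vanishing and transcendence of a single non-Tate period, which is why the statement remains conjectural, and I would expect the new ingredient of \cite{a-o-3} to be the most promising tool for establishing the needed differential-Galois largeness.
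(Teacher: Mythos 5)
This statement is left as a \emph{conjecture} in the paper: the authors give no proof of it (they only point to the analogous Conjecture 5.2 of \cite{a-o-t}), so there is nothing for your argument to match, and your proposal does not close the gap either --- as you yourself concede in the final paragraph. Concretely, the missing step is worse than you state it. Your ``Kolchin-type'' dichotomy is false as formulated: since $f,g_i,h_i$ are allowed to lie in $\ol{\Q(x)}$ (not just $\Q(x)$), the differential module generated by $f+\sum_i g_i\log h_i$ becomes an iterated unipotent extension only after pullback along a finite cover of $\P^1$; its global monodromy group is unipotent-by-finite, and irreducible subquotients of such a group, while necessarily of \emph{finite} image, can perfectly well have rank $\geq 2$ (monomial representations; compare the algebraic ${}_pF_{p-1}$'s of \cite{BH}, whose monodromy is finite and irreducible of rank $p$). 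So ``admits an irreducible subquotient of rank $\geq 2$'' does not separate the logarithmic class from its complement; the correct separating invariant would be an irreducible subquotient of rank $\geq2$ with \emph{infinite} image. This forces an extra case in your contraposition: when \eqref{log-eq1} fails for some $s$, you must also exclude the possibility that the non-Tate rank-two piece $V_s$ has finite monodromy. In that degenerate case the variation is isotrivial, the relevant component of the period is a genuine CM period of a constant Hodge structure of type $(2,0)+(0,2)$, and showing it lies outside $\ol{\Q(x)}+\ol{\Q(x)}\log\ol{\Q(x)}^\times$ amounts to a transcendence statement about $\Gamma$-values of Gross--Deligne/Grothendieck-period-conjecture type, which is open.

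Thus both of your proposed routes terminate in open problems: the differential-Galois route needs an unproven (and, as written, incorrectly stated) transcendence theorem, and the Hodge-theoretic fallback invokes the Beilinson--Hodge conjecture, which is likewise not a theorem. Your proposal is a reasonable reduction of the conjecture to other conjectures --- and the eigenvalue bookkeeping matching $\{sa\},\{sb\},\{sq\}$ to the interlacing condition is consistent with the paper's own translation of \eqref{log-eq1} --- but it is not a proof, and the paper's treatment (leaving the converse of Theorem \ref{log} conjectural) reflects exactly the obstruction you ran into.
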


In the seminal paper \cite{BH}, Beukers and Heckman gave a necessary and sufficient condition
for that ${}_pF_{p-1}$ is an algebraic function, or equivalently its monodromy group is finite.
Let $a_i,b_j\in\Q$.
Then their theorem tells that, 
under the condition that $\{a_i\}\ne\{b_j\}$ and $\{a_i\}\ne0$,
\[
{}_pF_{p-1}\left({a_1,\ldots,a_p\atop b_1,\ldots,b_{p-1}};x\right)\in\ol{\Q(x)}
\]
if and only if $(\{sa_1\},\ldots,\{sa_p\})$ and $(0,\{sb_1\},\ldots,\{sb_{p-1}\})$ interlace
for all $s\in \hat\Z^\times$
(loc.cit. Theorem 4.8).
Here we say that
two sets $(\alpha_1,\ldots,\alpha_p)$ and $(\beta_1,\ldots,\beta_p)$ interlace if and only if
\[
\alpha_1<\beta_1<\cdots<\alpha_p<\beta_p\mbox{ or }
\beta_1<\alpha_1<\cdots<\beta_p<\alpha_p
\]
when ordering $\alpha_1<\cdots<\alpha_p$ and $\beta_1<\cdots<\beta_p$.
In this terminology, 
\eqref{log-eq1} is translated into that $\{sq\}$ and $(\{sa\},\{sb\})$ interlace.
Our main theorem \ref{log} has no intersection with their theorem, while
they are obviously comparable.

\section{Hypergeometric Fibrations}\label{HG-sect}
\subsection{Definition}\label{HG-sect1}
Let $R$ be a finite-dimensional semisimple $\Q$-algebra.
Let $e:R\to E$ be a projection onto a number field $E$.
Let $X$ be a smooth projective variety over $k_\dR$, and $f:X\to \P^1$
a surjective map endowed with a multiplication on $R^1f_*\Q|_U$ by $R$
where $U\subset \P^1$ is the maximal Zariski open set such that $f$ is smooth over $U$. 
We say $f$ is a {\it hypergeometric fibration with multiplication by $(R,e)$} (abbreviated HG
fibration) if
the following conditions hold.
We fix an inhomogeneous coordinate $t\in\P^1$.
\begin{enumerate}
\item[\bf (a)]
$f$ is smooth over $\P^1\setminus\{0,1,\infty\}$,
\item[\bf (b)]
$\dim_E (R^1f_*\Q)(e)=2$ where we write $V(e):=E\ot_{e,R}V$ the $e$-part,
\item[\bf (c)]
Let $\Pic_f^0\to \P^1\setminus\{0,1,\infty\}$ be the Picard fibration whose general fiber
is the Picard variety $\Pic^0(f^{-1}(t))$,
and let $\Pic_f^0(e)$ be the component associated to the $e$-part $(R^1f_*\Q)(e)$
(this is well-defined up to isogeny).
Then $\Pic_f^0(e)\to\P^1\setminus\{0,1,\infty\}$ has a totally degenerate semistable reduction at $t=1$.
\end{enumerate}
The last condition {\bf (c)} is equivalent to say that
the local monodromy $T$ on $(R^1f_*\Q)(e)$ at $t=1$ is unipotent
and the rank of log monodromy $N:=\log(T)$ is maximal, namely 
$\rank(N)=\frac{1}{2}\dim_\Q (R^1f_*\Q)(e)$
($=[E:\Q]$ by the condition {\bf(b)}).

\subsection{HG fibration of Gauss type}\label{HG-sect2}
Let $f:X\to \P^1$ be a fibration over $\ol\Q$
whose general fiber $f^{-1}(t)$ is a nonsingular projective model of an affine curve
\begin{equation}\label{HG-sect1-eq1}
y^N=x^a(1-x)^b(1-tx)^{N-b},\quad 0<a,b<N,\, \mathrm{gcd}(N,a,b)=1.
\end{equation} 
$f$ is smooth outside $\{0,1,\infty\}$ so that the condition {\bf(a)} is satisfied.
The group $\mu_N$ of $N$-th roots of unity acts on $f^{-1}(t)$ by
$(x,y,t)\mapsto(x,\zeta y,t)$ for $\zeta\in\mu_N$, which gives rise to
 a multiplication on $R^1f_*\Q$ by the group ring $R_0:=\Q[\mu_N]$.
\begin{lem}\label{gauss.cond}[\cite{a-o}, Proposition 3.1]
Let $e_0:R_0:=\Q[\mu_N]\to E_0$ be a projection onto a number field $E_0$.
Then $(R_0,e_0)$ satisfies the conditions {\bf(b)} and {\bf(c)} if and only if
$ad\not\equiv0$ and $bd\not\equiv0$ modulo $N$ where 
$d:=\sharp\ker[\mu_N\to R_0^\times\os{e_0}{\to} E_0^\times]$.
\end{lem}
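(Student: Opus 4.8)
The plan is to decompose $R^1f_*\Q$ into $\mu_N$-eigenspaces and to translate conditions {\bf(b)} and {\bf(c)} into numerical statements about fractional parts: {\bf(b)} by a Hodge-theoretic dimension count, and {\bf(c)} by recognizing each eigenspace as a rank-$2$ Gauss hypergeometric local system. To begin, decompose the group ring $R_0=\Q[\mu_N]\cong\prod_{m\mid N}\Q(\zeta_m)$; a projection $e_0$ onto a number field $E_0$ is then the choice of one factor $E_0=\Q(\zeta_m)$, and unwinding the definition of $d=\#\ker[\mu_N\to E_0^\times]$ gives $m=N/d$. Fixing a primitive character $\chi$ of $\mu_N$, I would identify $(R^1f_*\Q)(e_0)\otimes_\Q\C$ with the sum of the eigenspaces $H^1(f^{-1}(t))_{\chi^i}$ over the $\phi(m)=[E_0:\Q]$ indices $i$ with $\gcd(i,N)=d$. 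These eigenspaces are permuted transitively by $\mathrm{Gal}(E_0/\Q)$, hence have a common dimension, and that common dimension is exactly $\dim_{E_0}(R^1f_*\Q)(e_0)$.

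Next I would record that $f^{-1}(t)\to\P^1_x$ is the $\mu_N$-cover branched over $x=0,1,1/t,\infty$ with local exponents $a,b,N-b,N-a$ modulo $N$ (the value at $\infty$ and the relation $a+b+(N-b)+(N-a)\equiv0$ being immediate). For {\bf(b)} I would apply the standard eigenspace dimension formula for cyclic covers,
\[
\dim H^{1,0}_{\chi^i}=-1+\{\tfrac{ia}{N}\}+\{\tfrac{ib}{N}\}+\{\tfrac{-ib}{N}\}+\{\tfrac{-ia}{N}\},\qquad
\dim_\C H^1_{\chi^i}=\dim H^{1,0}_{\chi^i}+\dim H^{1,0}_{\chi^{-i}},
\]
where the second equality uses that complex conjugation interchanges the $\chi^i$- and $\chi^{-i}$-parts. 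As $\{z\}+\{-z\}$ equals $1$ for $z\notin\Z$ and $0$ for $z\in\Z$, this dimension is $2$ when $ia\not\equiv0$ and $ib\not\equiv0$ and is $0$ otherwise (the case $ia\equiv ib\equiv0$ forcing $i\equiv0$ because $\gcd(N,a,b)=1$). For $i$ with $\gcd(i,N)=d$ one has $ia\equiv0\Leftrightarrow ad\equiv0$ and $ib\equiv0\Leftrightarrow bd\equiv0$ modulo $N$; hence {\bf(b)} holds for $(R_0,e_0)$ exactly when $ad\not\equiv0$ and $bd\not\equiv0$.

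For {\bf(c)} I would exhibit each $H^1_{\chi^i}$ as a rank-$2$ hypergeometric local system: its periods are Euler integrals
\[
\int x^{-ia/N}(1-x)^{-ib/N}(1-tx)^{-i(N-b)/N}\,dx
\]
representing ${}_2F_1(1-B,1-A;2-A-B;t)$ with $A=\{\tfrac{ia}{N}\}$ and $B=\{\tfrac{ib}{N}\}$. The exponent difference $\gamma-\alpha-\beta$ at $t=1$ then vanishes, so $T$ is unipotent there; and since $\alpha,\beta,\gamma-\alpha,\gamma-\beta$ all lie in $(0,1)$ once $ad,bd\not\equiv0$, the monodromy is irreducible and the unipotent operator at $t=1$ is genuinely nontrivial, giving $\rank(\log T)=1$ on each such eigenspace. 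Summing over the $[E_0:\Q]$ eigenspaces yields $\rank(\log T)=[E_0:\Q]=\tfrac12\dim_\Q(R^1f_*\Q)(e_0)$, which is the maximality required in {\bf(c)}; when instead $ad\equiv0$ or $bd\equiv0$ the $e_0$-part is zero and {\bf(b)} already fails. Putting the two computations together gives the stated equivalence.

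I expect this last step to be the main obstacle. The count for {\bf(b)} is essentially bookkeeping with fractional parts, but {\bf(c)} requires pinning down the Gauss--Manin connection of the $e_0$-part as a specific Gauss hypergeometric equation with the correct parameters, and then reading off from its Riemann scheme not merely unipotency but the maximal log-rank at $t=1$; this is where one must control the reducibility of the local system and the resonant (logarithmic) behaviour at $t=1$.
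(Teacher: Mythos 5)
The paper itself gives no proof of this lemma --- it is quoted from \cite{a-o}, Proposition 3.1 --- but your reconstruction is correct and uses precisely the ingredients this paper develops around the statement in \S\ref{HG-sect2}: the identification of each $\chi^i$-eigenspace with a Gauss hypergeometric local system via the Euler integral \eqref{HG-sect1-eq2}, and the Riemann scheme \eqref{HG-sect1-eq3} with double exponent $0$ at $t=1$, so that irreducibility (from $\alpha_n,\beta_n\notin\Z$) forces a nontrivial unipotent $T_1$ of log-rank one on each of the $[E_0:\Q]$ eigenspaces. Your fractional-part dimension count for condition {\bf(b)}, including the observation that $ia\equiv ib\equiv 0$ forces $i\equiv 0$ by $\gcd(N,a,b)=1$ and that $ia\equiv 0\Leftrightarrow ad\equiv 0$ when $\gcd(i,N)=d$, is likewise sound and is the expected content of the cited proposition.
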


\begin{defn}\label{gauss.type}
We say that $f$ 
is a {\it HG fibration of Gauss type with multiplication by $(\Q[\mu_N],e)$}
if
$ad\not\equiv0$ and $bd\not\equiv0$ modulo $N$.
\end{defn}
Let $\chi:R_0\to \ol\Q$ be a homomorphism of $\Q$-algebra factoring through $e$.
Let $n$ be an integer such that $\chi(\zeta)=\zeta^{-n}$ for all $\zeta\in \mu_N$. 
Note $\gcd(n,N)=1$.
By \cite{archinard} (13), p.917,
$H^1_\dR(X_t)(\chi)\cap H^{1,0}$ is spanned by a 1-form
\[
\omega_n:=\frac{x^{a_n}(1-x)^{b_n}(1-tx)^{c_n}}{y^n}dx,
\]
\[
a_n:=\lfloor\frac{an}{N}\rfloor,\,
b_n:=\lfloor\frac{bn}{N}\rfloor,\,
c_n:=\lfloor\frac{Nn-bn}{N}\rfloor=n-b_n-1.
\]
Let $P_1$ (resp. $P_2$) be a point of $X_t$ above $x=0$ (resp. $x=1$).
There are $\mathrm{gcd}(N,a)$-points above $x=0$
(resp. $\mathrm{gcd}(N,b)$-points above $x=1$).
Let $u$ be a path from $P_1$ to $P_2$ above the real interval $x\in[0,1]$.
It defines a homology cycle $u\in H_1(X_t,\{P_1,P_2\};\Z)$ with boundary.
Put $d_1:=\mathrm{gcd}(N,a)$, $d_2:=\mathrm{gcd}(N,b)$.
Since $\sigma^{d_1}P_1=P_1$ and $\sigma^{d_2}P_2=P_2$ for $\sigma\in\mu_N$
an automorphism,
one has a homology cycle
\begin{equation}\label{HG-sect1-eq2-p}
\delta:=(1-\sigma^{d_1})(1-\sigma^{d_2})u\in H_1(X_t,\Z).
\end{equation}
By an integral expression of Gauss hypergeometric functions 
(e.g. \cite{Bailey} p.4, 1.5 or \cite{slater} p.20, (1.6.6)), one has
\begin{align}
\int_\delta\omega_n
&=(1-\zeta^{-nd_1})(1-\zeta^{-nd_2})\int_0^1\omega_n\\
&=(1-\zeta^{-nd_1})(1-\zeta^{-nd_2})
B(\alpha_n,\beta_n){}_2F_1(\alpha_n,\beta_n,\alpha_n+\beta_n;t),
\label{HG-sect1-eq2}
\end{align}
where $\sigma(y)=\zeta y$ and 
\[
\alpha_n:=\left\{\frac{-an}{N}\right\},\quad 
\beta_n:=\left\{\frac{-bn}{N}\right\}.
\]
This shows that the monodromy on the 2-dimensional $H_1(X_t,\C)(\chi)$ is isomorphic
to the monodromy of the hypergeometric equation
\[
D_t(D_t+\alpha_n+\beta_n-1)-t(D_t+\alpha_n)(D_t+\beta_n),\quad D_t:=t\frac{d}{dt}
\] 
with the Riemann scheme
\begin{equation}\label{HG-sect1-eq3}
\left\{
\begin{matrix}
t=0&t=1&t=\infty\\
0&0&\alpha_n\\
1-\alpha_n-\beta_n&0&\beta_n
\end{matrix}
\right\}
\end{equation}
In particular, the monodromy is irreducible as $\alpha_n,\beta_n\not\in\Z$. 

\subsection{Hodge numbers}
Let $f:X\to \P^1$ be a HG fibration with multiplication by $(R_0,e_0)$.
Following \cite{a-o-3} \S 4.1, 
we consider motivic sheaves $\cM$ and $\cH$ which are defined in the following way.
Let $S:=\bA^1_{\ol\Q}\setminus\{0,1\}$ be defined over $\ol\Q$
with coordinate $\lambda$.
Let $\P^1_S:=\P^1\times S$ and denote the coordinates by $(t,\lambda)$.
Put $\P^1_S\supset\cU:=(\bA^1_{\ol\Q}\setminus\{0,1\}\times S)\setminus\Delta$
where $\Delta$ is the diagonal subscheme.
Let $l\geq 1$ be an integer. 
Let $\pi:\P^1_S\to\P^1_S$ be a morphism over $S$ given by $(t,\lambda)\mapsto 
(\lambda-t^l,\lambda)$.
Then we define
\[
\cM:=\pi_*\Q\ot \pr_1^*R^1f_*\Q|_{\mathscr U},\quad\pr_1:
\P^1_S=\P^1\times S\to\P^1
\]
a variation of Hodge-de Rham structures (abbreviated VHdR) on $\cU$ and
\[
\cH:=R^1\pr_{2*}\cM
,\quad \pr_2:\cU\to S
\] 
a variation of mixed Hodge-de Rham structures (abbreviated VMHdR) on $S$.
The weights of $\cH$ are at most $2,3,4$, and hence 
there is an exact sequence
\begin{equation}\label{HG-sect1-eq4}
0\lra W_2\cH\lra \cH\lra \cH/W_2\lra0
\end{equation}
of VMHdR structures on $S$.

Let $\mu_l$ be the group of $l$-th roots of unity which acts on $\pi_*\Q$ in a natural way.
Then $\cM$ has multiplication by the group ring $R:=R_0[\mu_l]$.
Let $e:R\to E$ be a projection onto a number field $E$ such that $\ker(e)\supset \ker(e_0)$.
There is unique embedding $E_0\hra E$ making the diagram
\[
\xymatrix{
R_0\ar[d]\ar[r]^{e_0}&E_0\ar[d]\\
R\ar[r]^e&E\\
}
\]
commutative.

For $\chi:R\to\ol\Q$ factoring through $E$, we denote by $V(\chi)$ the $\chi$-part
which is defined to be the subspace on which $r\in R$ acts by
multiplication by $\chi(r)$.

\begin{thm}\label{HI-prop}
Let $T_p$ denotes the local monodromy on $R^1f_*\ol\Q(\chi)$ at $t=p$.
Let $\alpha_j^\chi$ (resp, $\beta_j^\chi$) for $j=1,2$ be rational numbers
such that  $e^{2\pi i\alpha_j^\chi}$ (resp, $e^{2\pi i\beta_j^\chi}$) are eigenvalues of 
$T_0$ (resp. $T_\infty$).
Let $k$ be an integer such that $\chi(\zeta_l)=\zeta_l^k$ for $\zeta_l\in\mu_l$.
Suppose that $k/l,-k/l+\beta^\chi_j\not\in\Z$ and $\alpha_1^\chi\in\Z$.
Write $h_\chi^{p,2-p}:=\dim_{\ol\Q} \Gr^p_FW_2\cH(\chi)$.
Put
\[
d_\chi:=
2\{-k/l\}+\sum_{i=1}^2\{\beta^\chi_i\}-\{\beta^\chi_i-k/l\}.
\]
Then 
\[
(h^{2,0}_\chi,h^{1,1}_\chi,h^{0,2}_\chi)=
\begin{cases}
(1,1,0)&\mbox{if }d_\chi=2\\
(0,2,0)&\mbox{if }d_\chi=1\\
(0,1,1)&\mbox{if }d_\chi=0.
\end{cases}
\]
\end{thm}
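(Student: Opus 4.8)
The plan is to compute the Hodge numbers $h^{p,2-p}_\chi$ by tracking how the three successive constructions—the base change $\pi$, the tensor product defining $\cM$, and the cohomology $R^1\pr_{2*}$ defining $\cH$—act on Hodge filtrations. The key observation is that $W_2\cH(\chi)$ arises as the weight-$2$ part of $R^1\pr_{2*}\cM(\chi)$, which by the Leray/degeneration structure is built from $H^1(S,\pr_1^*R^1f_*\Q(\chi)\otimes\pi_*\Q)$. First I would fix the character decomposition: since $\cM$ has multiplication by $R=R_0[\mu_l]$, I would isolate the $\chi$-isotypic piece, splitting $\chi$ into its $R_0$-part (governed by the Gauss-type data of \S\ref{HG-sect2}, whence the eigenvalues $e^{2\pi i\alpha_j^\chi}$, $e^{2\pi i\beta_j^\chi}$ of $T_0$, $T_\infty$) and its $\mu_l$-part (the integer $k$ with $\chi(\zeta_l)=\zeta_l^k$, contributing the $\pi_*\Q$ factor).

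Next I would make the Hodge filtration on $\pi_*\Q(\chi)$ explicit. The cover $\pi:(t,\lambda)\mapsto(\lambda-t^l,\lambda)$ is a cyclic $\mu_l$-cover, so on the $k$-eigenspace the relevant differential forms carry a Hodge type determined by the fractional part $\{k/l\}$, together with $\{-k/l\}$ from the dual contribution. Combined with the local exponents $\{\beta^\chi_i\}$ at $t=\infty$, these fractional parts are exactly the quantities assembled in the defining sum
\[
d_\chi=2\{-k/l\}+\sum_{i=1}^2\bigl(\{\beta^\chi_i\}-\{\beta^\chi_i-k/l\}\bigr).
\]
The strategy is to show that $d_\chi$ equals $\dim_{\ol\Q}\Gr^2_FW_2\cH(\chi)+\dim_{\ol\Q}\Gr^1_FW_2\cH(\chi)$, i.e. $d_\chi=h^{2,0}_\chi+h^{1,1}_\chi$, while the total rank $\dim W_2\cH(\chi)=h^{2,0}_\chi+h^{1,1}_\chi+h^{0,2}_\chi$ is determined by the Euler characteristic of the local system on $\cU$ restricted to a fiber of $\pr_2$. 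The hypotheses $k/l\notin\Z$, $-k/l+\beta^\chi_j\notin\Z$, and $\alpha^\chi_1\in\Z$ ensure that the local monodromies are nondegenerate enough that this local system has no unexpected invariants or coinvariants, so the rank is $2$ and symmetry $h^{2,0}_\chi=h^{0,2}_\chi$ by polarization pins down all three numbers from $d_\chi$ alone.

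The main technical step, and the principal obstacle, is computing the Hodge filtration on $R^1\pr_{2*}$ of the tensor product correctly at the boundary: one must identify $\Gr^p_F$ of $W_2\cH$ via the residue/Gauss–Manin description along the degeneration at $t=1$ (where by condition \textbf{(c)} the monodromy $N$ has maximal rank) and the divisor $\Delta$ removed to form $\cU$. Concretely I expect to realize the $F$-filtration using the logarithmic de Rham complex $\Omega^\bullet_{\P^1_S}(\log D)$ for the appropriate boundary divisor $D$, take $\chi$-isotypic pieces of $R^1\pr_{2*}$ of $\pi_*\cO\otimes\pr_1^*(\text{the Gauss-type line bundle spanned by }\omega_n)$, and count global sections of the resulting rank-one sheaves on the fibers. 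The delicate point is the precise bookkeeping of which terms $\{\beta^\chi_i-k/l\}$ versus $\{\beta^\chi_i\}$ survive in $\Gr^2_F$ versus $\Gr^1_F$; the three-case answer for $(h^{2,0}_\chi,h^{1,1}_\chi,h^{0,2}_\chi)$ reflects exactly whether the Hodge type $(1,0)$ form on the base cover lands above, at, or below the Hodge step of the fiber form $\omega_n$, and matching this to the integer value $d_\chi\in\{0,1,2\}$ is where the argument must be carried out with care. Once the $\Gr^2_F$ count is shown to equal $\{-k/l\}+\sum_i(\{\beta^\chi_i\}-\{\beta^\chi_i-k/l\})-\{-k/l\}$ and symmetrized, the stated cases follow.
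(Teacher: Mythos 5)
Your proposal has two concrete errors that make it inconsistent with the very statement being proved, and it leaves the actual crux unaddressed. First, the symmetry $h^{2,0}_\chi=h^{0,2}_\chi$ ``by polarization'' is false: complex conjugation does not preserve the $\chi$-isotypic part but interchanges it with the $\bar\chi$-part, so polarization only gives $h^{p,q}_\chi=h^{q,p}_{\bar\chi}$. Indeed the theorem's own conclusion is asymmetric in two of the three cases, $(1,1,0)$ for $d_\chi=2$ and $(0,1,1)$ for $d_\chi=0$; your symmetry principle would wrongly rule these out. Second, your proposed identity $d_\chi=h^{2,0}_\chi+h^{1,1}_\chi$ contradicts the stated answer: for $d_\chi=1$ the conclusion is $(0,2,0)$, giving $h^{2,0}_\chi+h^{1,1}_\chi=2\neq 1$, and for $d_\chi=0$ it is $(0,1,1)$, giving $1\neq 0$. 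The relation that actually holds (and that the paper establishes) is $2h^{2,0}_\chi+h^{1,1}_\chi=d_\chi+1$, which is what you get from knowing the Hodge type of the determinant $\bigwedge^2_E W_2\cH(e)$; the paper obtains this from Theorem 4.2 of \cite{asakura-fresan}, not from a fiberwise filtration count.

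Even after correcting both points, the essential difficulty remains untouched: the determinant-level information $2h^{2,0}_\chi+h^{1,1}_\chi=2$ in the case $d_\chi=1$ is satisfied by both $(0,2,0)$ and $(1,0,1)$, and nothing in your outline can separate them (your polarization argument, besides being invalid, would permit both). The paper excludes $(1,0,1)$ by identifying the underlying connection of $W_2\cH(\chi)$ as a hypergeometric differential operator (\cite{a-o-3} Theorem 5.8) and invoking Fedorov's theorem \cite{fedorov}, which restricts the possible Hodge triples to $(2,0,0),(0,2,0),(0,0,2)$; alternatively one can use a nearby-cycle injection $W_2H_0(e)\hookrightarrow\psi_{\lambda=0}W_2\cH(e)$ forcing $h^{1,1}_\chi>0$. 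Your remaining plan---a direct computation of $\Gr^\bullet_F$ of $R^1\pr_{2*}$ of the tensor product via logarithmic de Rham complexes---is exactly the step you yourself flag as ``the principal obstacle'' to be ``carried out with care,'' so the proof's core is absent rather than merely sketched.
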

Note that $d_\chi$ takes values only in $0,1$ or $2$. Indeed
\[
d_\chi=
\overbrace{\{\beta^\chi_1\}+\{-k/l\}-\{\beta^\chi_1-k/l\}}^{\delta_1}
+\overbrace{\{\beta^\chi_2\}+\{-k/l\}-\{\beta^\chi_2-k/l\}}^{\delta_2}
\]
and each $\delta_i$ takes values $0$ or $1$. 
\begin{proof}
We first note that $\dim_E W_2\cH(e)=2$ (\cite{a-o-3} \S 4.3).
We employ two results from \cite{asakura-fresan} and \cite{fedorov} respectively.
First of all, it follows from \cite{asakura-fresan} Theorem 4.2
that one has 
the Hodge numbers of the determinant
$D:=\mathrm{det}_EW_2\cH(e)=\bigwedge^2_EW_2\cH(e)$.
The result is
\[
(D^{4,0}_\chi,D^{3,1}_\chi,D^{2,2}_\chi,D^{1,3}_\chi,D^{0,4}_\chi)=
\begin{cases}
(0,1,0,0,0)&\mbox{if }d_\chi=2\\
(0,0,1,0,0)&\mbox{if }d_\chi=1\\
(0,0,0,1,0)&\mbox{if }d_\chi=0
\end{cases}
\]
where we put $D^{p,4-p}_\chi:=\dim\Gr^p_FD(\chi)$.
Since $D_\chi^{p,4-p}=1$ $\Leftrightarrow$ $2h^{2,0}_\chi+h^{1,1}_\chi=p$, this implies
\[
(h^{2,0}_\chi,h^{1,1}_\chi,h^{0,2}_\chi)=
\begin{cases}
(1,1,0)&\mbox{if }d_\chi=2\\
(0,2,0)\mbox{ or }(1,0,1)&\mbox{if }d_\chi=1\\
(0,1,1)&\mbox{if }d_\chi=0
\end{cases}
\]
which completes the proof in the case $d_\chi\ne1$.
Suppose $d_\chi=1$. 
We want to show that $(h^{2,0}_\chi,h^{1,1}_\chi,h^{0,2}_\chi)=(1,0,1)$
cannot happen.
By \cite{a-o-3} Theorem 5.8, the underlying connection
of $W_2\cH(\chi)$ is defined by the hypergeometric differential operator as in loc.cit.
One can apply the main theorem in \cite{fedorov} and then
the possible triplets of the Hodge numbers are at most $(2,0,0),(0,2,0),(0,0,2)$.
In particular the case $(h^{2,0}_\chi,h^{1,1}_\chi,h^{0,2}_\chi)=(1,0,1)$ is excluded.
This completes the proof in case $d_\chi=1$.
\end{proof}
\begin{rem}
In the latter half of the proof of Theorem \ref{HI-prop},
there is an alternative discussion without using the main theorem of \cite{fedorov}.
Let $\pi_0:\P^1\to \P^1$ be a map given by $t\mapsto -t^l$.
Let $\cM_0:=\pi_{0*}\Q\ot R^1f_*\Q$ be a VHdR on $\P^1\setminus\{0,1,\infty\}$.
Put $H_0:=H^1(\P^1\setminus\{0,1,\infty\},\cM_0)$.
Let $\psi_{\lambda=0}$ denotes the nearby cycle functor.
Then one can construct an injection
\[
\xymatrix{
E\cong W_2H_0(e)\ar@{^{(}->}[r]& \psi_{\lambda=0}W_2\cH(e)
}
\]
of mixed Hodge-de Rham structures.
The cohomology group $W_2H_0(e)$ is studied in detail in \cite{a-o}.
In particular, if $d_\chi=1$, then the Hodge type of $W_2H_0(\chi)$ is $(1,1)$.
Hence $h^{1,1}_\chi>0$ by the above injection, which excludes the case
$(h^{2,0}_\chi,h^{1,1}_\chi,h^{0,2}_\chi)=(1,0,1)$.
\end{rem}

\begin{cor}\label{HI}
$W_2\cH(e)$ is a Tate VHdR of type $(1,1)$ if and only if
$d_\chi=1$ for all $\chi:R\to\ol\Q$, equivalently
\[
2\{-sk_0/l\}+\sum_{i=1}^2\{s\beta^{\chi_0}_i\}-\{s(\beta^{\chi_0}_i-k_0/l)\}=1
\]
\[
\Longleftrightarrow\quad 
\{s\beta^{\chi_0}_1\}<\{sk_0/l\}<\{s\beta^{\chi_0}_2\}\mbox{ or }
\{s\beta^{\chi_0}_2\}<\{sk_0/l\}<\{s\beta^{\chi_0}_1\}
\]
for $\forall s\in \hat{\Z}^\times$
where $\chi_0$ is a fixed one and $\beta_j^{\chi_0}, k_0$ are the rational numbers
arising from $\chi_0$.
\end{cor}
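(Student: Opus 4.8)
The plan is to read the first equivalence off Theorem~\ref{HI-prop} directly, and then to obtain the two arithmetic reformulations by unwinding the Galois action that permutes the characters $\chi$. Since $\dim_E W_2\cH(e)=2$, each isotypic piece $W_2\cH(\chi)$ is two-dimensional, so its total Hodge number is $2$; the property that $W_2\cH(e)$ be Tate of type $(1,1)$ is pinned down Hodge-theoretically by $h^{2,0}_\chi=h^{0,2}_\chi=0$ (equivalently $h^{1,1}_\chi=2$) for every $\chi:R\to\ol\Q$. Comparing with the trichotomy of Theorem~\ref{HI-prop}, this is exactly the middle case, so it holds if and only if $d_\chi=1$ for all $\chi$. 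This disposes of the first ``if and only if'' with no further computation.

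For the arithmetic translation, the main step is to describe how $\beta^\chi_j$ and $k/l$ vary as $\chi$ ranges over the characters factoring through $E$. As these are precisely the $\Q$-algebra maps $E\hra\ol\Q$ and $E$ is a number field, $\mathrm{Gal}(\ol\Q/\Q)$ permutes them transitively; because $R=R_0[\mu_l]=\Q[\mu_N][\mu_l]$ is generated by roots of unity, the action factors through $\hat\Z^\times\cong\mathrm{Gal}(\Q^{\mathrm{ab}}/\Q)$, with $s\in\hat\Z^\times$ acting by $\zeta\mapsto\zeta^s$. Fixing $\chi_0$ and writing $\chi_s$ for its conjugate under $s$, I would track the invariants: from $\chi_0(\zeta_l)=\zeta_l^{k_0}$ one gets $\chi_s(\zeta_l)=\zeta_l^{sk_0}$, so $k/l$ becomes $sk_0/l$; and since the eigenvalues $e^{2\pi i\beta^{\chi_0}_j}$ of $T_\infty$ on $R^1f_*\ol\Q(\chi_0)$ are carried by Galois to those on the $\chi_s$-part, one has $\beta^{\chi_s}_j\equiv s\beta^{\chi_0}_j\pmod\Z$. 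Substituting into the formula for $d_\chi$ and using $\{s\beta^{\chi_0}_i-sk_0/l\}=\{s(\beta^{\chi_0}_i-k_0/l)\}$, the condition ``$d_\chi=1$ for all $\chi$'' becomes the displayed equation for all $s\in\hat\Z^\times$, transitivity guaranteeing that the orbit $\{\chi_s\}$ exhausts every relevant $\chi$.

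For the interlacing form I would invoke the elementary identity $\{x\}+\{y\}-\{x+y\}\in\{0,1\}$, the value being $1$ exactly when $\{x\}+\{y\}\ge1$. Applied to each summand $\delta_i=\{\beta^\chi_i\}+\{-k/l\}-\{\beta^\chi_i-k/l\}$ of Theorem~\ref{HI-prop} this gives $\delta_i=1\iff\{\beta^\chi_i\}+\{-k/l\}\ge1\iff\{\beta^\chi_i\}>\{k/l\}$, the inequality being strict by the standing hypotheses $k/l,\,\beta^\chi_i-k/l\notin\Z$. Hence $d_\chi=\delta_1+\delta_2=1$ precisely when exactly one of $\{\beta^\chi_1\},\{\beta^\chi_2\}$ exceeds $\{k/l\}$, i.e. when $\{k/l\}$ lies strictly between the two; taking $\chi=\chi_s$ yields the stated interlacing condition.

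I expect the only genuine obstacle to be the bookkeeping of the second paragraph: verifying with the correct signs that the monodromy eigenvalues and the $\mu_l$-character simultaneously transform by $s$-th power under $\hat\Z^\times$, so that $\beta^{\chi_s}_j\equiv s\beta^{\chi_0}_j$ and $k\mapsto sk_0$ hold together. The explicit Gauss-type description in \S\ref{HG-sect2}, where the exponents at $t=\infty$ depend linearly on the integer $n$ (which scales by $s$ under Galois), should make this transparent, but the conventions must be matched carefully.
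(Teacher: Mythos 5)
Your proposal is correct and follows essentially the same route as the paper, which states Corollary \ref{HI} without proof as an immediate consequence of Theorem \ref{HI-prop}. The three steps you spell out --- identifying the Tate condition with the middle case $(0,2,0)$ of the trichotomy, transporting $(k_0/l,\beta_j^{\chi_0})$ along the $\hat\Z^\times$-orbit of the characters $\chi$ via Galois equivariance of the monodromy eigenvalues, and converting $d_\chi=1$ into the strict interlacing using $\{x\}+\{y\}-\{x+y\}\in\{0,1\}$ together with $k/l,\,\beta_i^\chi-k/l\notin\Z$ --- are exactly the intended unwinding.
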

\subsection{Beilinson Regulator}
Let $\psi_{t=1}$ be the nearby cycle functor along the function $t-1$ on $\cU$, and put
\[
C:=\Gr^W_2\psi_{t=1}\cM\cong \pi_*\Q|_{\{1\}\times S}\ot (\Gr^W_2\psi_{t=1}R^1f_*\Q)
\]
a VHdR on $S$. Then there is a natural embedding
\[
C\ot\Q(-1)\lra \cH/W_2
\]
and it gives an extension
\begin{equation}\label{HG-sect1-eq5}
\xymatrix{
0\ar[r] &W_2\cH(e)\ar[r]& \cH'(e)\ar[r]& C(e)\ot\Q(-1)\ar[r]&0
}
\end{equation}
of VMHdR with multiplication by $E$ which is induced from \eqref{HG-sect1-eq4}.
Note $C(e)$ is one-dimensional over $E$ and endowed with Hodge type $(1,1)$
by {\bf (c)} in \S \ref{HG-sect1}.

In \cite{a-o-3} \S 5 we discussed the extension data of \eqref{HG-sect1-eq5}.
More precisely let $\O^\zar$ be 
the Zariski sheaf of polynomial functions (with coefficients in $\ol\Q$) on 
$S=\bA^1_{\ol\Q}\setminus\{0,1\}$ with coordinate $\lambda$.
Let $\O^{an}$ be the sheaf of analytic functions on $S^{an}=\C^{an}\setminus\{0,1\}$.
Let $a:S^{an}\to S^\zar$
be the canonical morphism from the analytic site to the Zariski site.
Set
\[
\cJ:=\Coker[a^{-1}F^2W_2\cH_\dR\op\iota^{-1}W_2\cH_B\to \O^{an}\ot_{a^{-1}\O^\zar}a^{-1}W_2\cH_\dR]
\]
a sheef on the analytic site $\C^{an}\setminus\{0,1\}$.
Let $h:\wt{S}\to S$ be a generically finite and dominant map
such that $\sqrt[l]{\lambda-1}\in \ol\Q(\wt{S})$. 
Then $h^*C(e)$ is a constant VHdR of type $(1,1)$.
The connecting homomorphism arising from \eqref{HG-sect1-eq5} gives a map
\begin{equation}\label{HG-sect1-eq6}
\rho:h^*C(e)\ot\Q(1)\lra \vg(\wt{S}^{an},h^*\cJ(e))
\end{equation}
(see \cite{a-o-3} \S 5.2 for the detail).
This agrees with the {\it Beilinson regulator map} on the
motivic cohomology supported on singular fibers.
Let $\pi:\P^1_{\wt{S}}:=\P^1\times_{\ol\Q} \wt{S}\to \P^1$ be given by 
$(s,\lambda')\mapsto
h(\lambda')-s^l$. Let
\[
\xymatrix{
X_{\wt{S}}\ar[d]_{g}\ar[r]^{i\qquad}\ar[rd]^{f_{\wt{S}}}&\P^1_{\wt{S}}
\times_{\P^1}X\ar[r]\ar[d]&X\ar[d]^f\\
\wt S&\P^1_{\wt{S}}\ar[r]^\pi\ar[l]_p&\P^1
}
\]
with $i$ desingularization and $p$ the 2nd projection.
Let
\[
\reg:H^3_\cM(X_{\wt{S}},\Q(2))\lra H^3_\cD(X_{\wt{S}},\Q(2))
=\Ext^3_{\mathrm{MHM}(X_{\wt S})}(\Q,\Q(2))
\]
be the Beilinson regulator map where $\mathrm{MHM}(\wt{S})$ denotes the category
of mixed Hodge modules on $\wt S$.
There is the canonical surjective map
\[
\Ext^3_{\mathrm{MHM}(X_{\wt S})}(\Q,\Q(2))
\lra \Ext^1_{\mathrm{VMHdR}(\wt S)}(\Q,R^2g_*\Q(2)).
\]
Let $U_{\wt S}\subset \P^1_{\wt S}$ be a Zariski open set on which
$f_{\wt S}$ is smooth and projective.
Put 
\[
H^3_\cM(X_{\wt{S}},\Q(2))_0:=\ker[
H^3_\cM(X_{\wt{S}},\Q(2))\lra H^3_\cM(f^{-1}_{\wt{S}}(U_{\wt S}),\Q(2))]
\]
and $(R^2g_*\Q(2))_0:=\Ker[R^2g_*\Q(2)\to p_*(R^2(f_{\wt S})_*\Q(2)|_{U_{\wt S}})]$.
Then there is the canonical surjective map $(R^2g_*\Q(2))_0\to h^*W_2\cH(2)$ and 
the Beilinson regulator map induces
\[
\reg_0:H^3_\cM(X_{\wt{S}},\Q(2))_0\lra 
\Ext^1_{\mathrm{VMHdR}(\wt S)}(\Q,h^*W_2\cH(2))\lra \vg(\wt{S}^{an},h^*\cJ).
\]
The compatibility with \eqref{HG-sect1-eq6} is given by the commutativity of
a diagram
\begin{equation}\label{HG-sect1-eq4-b1}
\xymatrix{
H^3_{\cM,D_{\wt{S}}}(X_{\wt{S}},\Q(2))\ar[r]\ar[d]&h^*C(e)\ot\Q(1)\ar[d]^\rho\\
H^3_\cM(X_{\wt{S}},\Q(2))_0\ar[r]^{\reg_0}
&\vg(\wt{S}^{an},h^*\cJ)
}
\end{equation}
where $D_{\wt S}:=X_{\wt S}\setminus U_{\wt S}$.

\subsection{Regulator Formula for HG fibrations of Gauss type}\label{HG-sect3}
One of the main results in \cite{a-o-3} (which we call {\it regulator formula})
is an explicit description of the map $\rho$ in \eqref{HG-sect1-eq6}.
Here we apply \cite{a-o-3} Theorem 5.9 (=a precise version of regulator formula)
to the case that $f$ is a HG fibration of Gauss type (see Definition \ref{gauss.type}).

\medskip

Let $f:X\to \P^1$ be a HG fibration of Gauss type with multiplication
by $(R_0:=\Q[\mu_N],e_0)$ as in Definition \ref{gauss.type}.
Let $\chi:E_0\to\ol\Q$ be a homomorphism such that $\sigma(\zeta)=\zeta^{-n}$. 
Recall from \S \ref{HG-sect2} that $F^1H^1_\dR(X_t)(\chi)$ is one dimensional
and spanned by a 1-form
\[
\omega_n:=\frac{x^{a_n}(1-x)^{b_n}(1-tx)^{c_n}}{y^n}dx,
\]
\[
a_n:=\lfloor\frac{an}{N}\rfloor,\,
b_n:=\lfloor\frac{bn}{N}\rfloor,\,
c_n:=\lfloor\frac{Nn-bn}{N}\rfloor=n-b_n-1
\]
where $n\in\{1,2,\ldots,N-1\}$ such that $\chi(\zeta)=\zeta^{-n}$ 
for $\forall\zeta\in\mu_N$.
\begin{lem}\label{HG-sect3-lem1}
Let $D_0,D_1$ be the reduced singular fibers over $t=0,1$. We assume that $D_0+D_1$ 
is a NCD.
Then $t\omega_n\in \vg(\P^1\setminus\{\infty\},f_*\Omega^1_{X/\P^1}(\log D_0+D_1))$.
\end{lem}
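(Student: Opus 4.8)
The plan is to prove the stated holomorphy by a local analysis of the differential form $\omega_n$ at each of the points lying over $t=0$ and $t=1$, verifying that $t\omega_n$ extends to a section of the relative log-differential sheaf $f_*\Omega^1_{X/\P^1}(\log(D_0+D_1))$. The global nature of the claim over $\P^1\setminus\{\infty\}$ reduces, by the sheaf-theoretic definition of sections, to checking the allowed order of vanishing and poles in each vertical direction along $D_0$ and $D_1$, together with the horizontal behaviour in the fibre coordinate. Since we have removed the point $t=\infty$, I do not need to control the behaviour there, which is the source of the extra factor of $t$.

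First I would work in the fibre coordinates. On a general fibre $X_t$ we have the curve $y^N=x^a(1-x)^b(1-tx)^{N-b}$, and $\omega_n=x^{a_n}(1-x)^{b_n}(1-tx)^{c_n}y^{-n}\,dx$ with $a_n,b_n,c_n$ as defined via the floor functions. I would first confirm that $\omega_n$ is a holomorphic relative $1$-form on the smooth locus: this is precisely the content of the cited computation from \cite{archinard} that $\omega_n$ spans $F^1H^1_\dR(X_t)(\chi)$, so it has no poles on the smooth fibres. The remaining work is entirely at the bad fibres $t=0$ and $t=1$, where I would use the NCD hypothesis on $D_0+D_1$ to choose local coordinates in which each component of the singular fibre is a coordinate hyperplane, and then compute the order of $\omega_n$ (and of $t\,\omega_n$) along each such component.

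The key steps, in order, are: (1) pass to the desingularized total space $X$ and describe explicitly the reduced fibres $D_0,D_1$; (2) at $t=1$, where by condition {\bf(c)} the reduction is totally degenerate semistable, compute the pullback of $\omega_n$ near each component of $D_1$ and check that it acquires at worst a log pole $d\log$ along the vertical divisor, so that $\omega_n$ itself — without the extra $t$ — already lies in $f_*\Omega^1_{X/\P^1}(\log D_1)$ near $t=1$; (3) at $t=0$, perform the analogous local computation, where I expect $\omega_n$ to have a genuine pole coming from the factor $(1-tx)^{c_n}$ degenerating and from the branch structure of $y$, and show that multiplication by $t$ exactly cancels this pole, yielding a section with at worst log poles along $D_0$. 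The factor of $t$ in the statement is thus the device that trades an honest pole at $t=0$ for a log pole.

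The main obstacle will be step (3): the careful bookkeeping of the ramification of the cyclic cover $y^N=\cdots$ over $t=0$ and the explicit local coordinates on the desingularization near the components of $D_0$. One must track how the fractional exponents interact with the blow-ups needed to produce the NCD, and verify that the $t$-adic order of the local expression of $t\,\omega_n$ is $\geq -1$ along each vertical component (the log-pole bound) and nonnegative in the horizontal direction. This is where the precise values $a_n=\lfloor an/N\rfloor$, $b_n=\lfloor bn/N\rfloor$, $c_n=n-b_n-1$ enter decisively: they are engineered so that the naive pole orders land exactly at the boundary of what $\log D_0$ permits once the correcting factor $t$ is inserted. I would organize the computation fibre-component by fibre-component, reducing everything to a one-variable valuation estimate along each branch of $y$ over $x=0$, $x=1$, and the node directions, and invoke the NCD assumption to guarantee these local pictures glue to a global section.
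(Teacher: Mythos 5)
Your plan has a genuine gap: it presupposes explicit local equations for $X$ near the fibers $D_0$, $D_1$, but no such data is available. The lemma concerns an arbitrary smooth projective model $X$ of the curve \eqref{HG-sect1-eq1}, subject only to the hypothesis that $D_0+D_1$ is a NCD; neither the paper nor the statement supplies a concrete desingularization, and constructing one for general $(N,a,b)$ --- tracking the cyclic-cover ramification through the necessary blow-ups --- is itself a substantial project which your proposal defers rather than carries out (you name it as ``the main obstacle'' but give no indication of how the bookkeeping closes). Worse, even if you completed the computation on one explicitly constructed NCD model $X'$, that would not prove the statement for the given $X$: you would need model-independence of $f_*\Omega^1_{X/\P^1}(\log D)$, and the standard proof of that independence is Steenbrink's identification of this sheaf with $F^1$ of Deligne's canonical extension $\cH_e$ --- which is exactly the tool that makes a model-free argument possible, and is the route the paper actually takes.

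Indeed, the paper's proof never touches the geometry of the singular fibers. It invokes the cited results of Steenbrink to translate the claim into $t\omega_n\in\Gamma(\P^1\setminus\{\infty\},\cH_e)$, where $\cH_e$ is the canonical extension, characterized purely by the Gauss--Manin connection having log poles with residue eigenvalues in $[0,1)$. The connection matrix in the basis $\{\omega_n,\eta_n\}$ (with $\eta_n$ an auxiliary $1$-form) is extracted from the period integrals \eqref{HG-sect3-lem1-eq2}, \eqref{HG-sect3-lem1-eq3} together with contiguity relations for ${}_2F_1$, after which the local frames of $\cH_e$ at $t=0$ and $t=1$ follow by elementary linear algebra; the NCD hypothesis enters only to invoke Steenbrink, not to do coordinate geometry. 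Your expected dichotomy is also slightly off: the paper's frames show that $\omega_n$ itself is already a local section of $\cH_e$ at $t=0$ whenever $\beta^\chi_1+\beta^\chi_2\leq 1$, so the factor $t$ is genuinely needed only in the case $\beta^\chi_1+\beta^\chi_2>1$ (and is harmless at $t=1$).
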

\begin{proof}
Put $S=\P^1\setminus\{0,1,\infty\}$ and $U=f^{-1}(S)$. 
Let $\cH:=H^1_\dR(U/S)$ be the bundle and $\nabla:\cH\to\Omega^1_S\ot\cH$ the
Gauss-Manin connection. 
Let $D_\infty$ be the reduced singular fibers over $t=\infty$ and assume that it is a NCD.
Put $T:=\{0,1,\infty\}$.
Recall that the sheaf $\Omega^1_{X/\P^1}(\log D)$ ($D:=D_0+D_1+D_\infty)$ is
defined by the exact sequence
\[
0\lra f^*\Omega^1_{\P^1}(\log T)\lra \Omega^1_X(\log D)\lra
\Omega^1_{X/\P^1}(\log D)\lra0.
\]
Let $\cH_e$ be Deligne's canonical extension over $\P^1$. This is characterized as
a subbundle $\cH_e\subset j_*\cH$ $(j:S\hra\P^1$) which satisfies
\begin{itemize}
\item
$\nabla$ has at most log poles, $\nabla:\cH_e\to\Omega^1_{\P^1}(\log(0+1+\infty))
\ot\cH_e$,
\item
The eigenvalues of residue $\Res(\nabla)$ at $t=0,1,\infty$ belong to $[0,1)$. 
\end{itemize}
Then there is an isomorphism
\[
\cH_e\cong R^1f_*\Omega^\bullet_{X/\P^1}(\log D)
\]
(\cite{steenbrink} 2.20) and $F^1\cH_e:=\cH_e\cap 
j_*F^1\cH\cong f_*\Omega^1_{X/\P^1}(\log D)$
(loc.cit. 4.20 (ii)).
Hence the desired assertion is equivalent to
\begin{equation}\label{HG-sect3-lem1-eq1}
t\omega_n\in \vg(\P^1\setminus\{\infty\},\cH_e).
\end{equation}
To show this, we give a local frame of $\cH_e$ at $t=0,1$ explicitly.
Let
\[
\eta_n:=\frac{x^{a_n}(1-x)^{b_n+1}(1-tx)^{c_n}}{y^n}dx,
\]
and put
\[\beta^\chi_1:=\left\{\frac{-an}{N}\right\},\quad
\beta^\chi_2:=\left\{\frac{-bn}{N}\right\}.
\]
Recall from \eqref{HG-sect1-eq2-p} 
a homology cycle $\delta:=(1-\sigma^{d_1})(1-\sigma^{d_2})u\in H_1(X_t,\Z)$.
Then 
\begin{equation}\label{HG-sect3-lem1-eq2}
\int_\delta\omega_n=
(1-\zeta^{-nd_1})(1-\zeta^{-nd_2})B(\beta^\chi_1,\beta^\chi_2)
F(\beta^\chi_1,\beta^\chi_2,\beta^\chi_1+\beta^\chi_2;t),
\end{equation}
\begin{equation}\label{HG-sect3-lem1-eq3}
\int_\delta\eta_n=
(1-\zeta^{-nd_1})(1-\zeta^{-nd_2})B(\beta^\chi_1,\beta^\chi_2+1)
F(\beta^\chi_1,\beta^\chi_2,1+\beta^\chi_1+\beta^\chi_2;t).
\end{equation}
This shows that $\omega_n$ and $\eta_n$ are basis of the $\chi$-part $\cH(\chi)$
of the bundle (over a Zariski open set of $\P^1\setminus\{0,1,\infty\}$).
Denote by $\cH(\chi)^*$ the dual connection, and by
$\{\omega^*_n,\eta^*_n\}$ the dual basis. Then 
\[
\left(\int_\delta\omega_n\right)\omega^*_n
+\left(\int_\delta\eta_n\right)\eta^*_n
\]
is annihilated by the dual connection, and hence
\begin{equation}\label{HG-sect3-lem1-eq4}
d\left(\int_\delta\omega_n\right)\omega^*_n
+d\left(\int_\delta\eta_n\right)\eta^*_n
+\left(\int_\delta\omega_n\right)\nabla(\omega^*_n)
+\left(\int_\delta\eta_n\right)\nabla(\eta^*_n)=0.
\end{equation}
Now
\eqref{HG-sect3-lem1-eq2},
\eqref{HG-sect3-lem1-eq3},
\eqref{HG-sect3-lem1-eq4} together with the formulas
\[
(1-t)\frac{d}{dt}F(a,b,a+b;t)=\frac{ab}{a+b}F(a,b,a+b+1;t),
\]
\[
t\frac{d}{dt}F(a,b,a+b+1;t)=(a+b)(F(a,b,a+b;t)-F(a,b,a+b+1;t))
\]
imply
\[
(\nabla(\omega^*_n),\nabla(\eta^*_n))=dt\ot(\omega^*_n,\eta^*_n)\begin{pmatrix}
0&-\beta^\chi_1/(1-t)\\
-\beta^\chi_2/t&(\beta^\chi_1+\beta^\chi_2)/t
\end{pmatrix}
\]
\[
\Longleftrightarrow\quad(\nabla(\omega_n),\nabla(\eta_n))=dt\ot(\omega_n,\eta_n)\begin{pmatrix}
0&\beta^\chi_2/t\\
\beta^\chi_1/(1-t)&-(\beta^\chi_1+\beta^\chi_2)/t
\end{pmatrix}.
\]
Then it is an elementary linear algebra to compute
local frames of $\cH_e$ :
\[
\cH_e(\chi)|_{t=0}=\begin{cases}
\langle \omega_n,t(\beta_2^\chi\omega_n+
(\beta^\chi_1+\beta^\chi_1)\eta_n)\rangle &\beta^\chi_1+\beta^\chi_2\leq 1\\
\langle t\omega_n,(\beta^\chi_1+\beta^\chi_2-1)\omega_n+t\beta^\chi_1
\eta_n\rangle &\beta^\chi_1+\beta^\chi_2> 1
\end{cases}
\]
\[
\cH_e(\chi)|_{t=1}=
\langle \omega_n,\eta_n\rangle.
\]
Now \eqref{HG-sect3-lem1-eq1} is immediate. 
\end{proof}

Let $e_0:\mu_N \to E_0^\times$ be an injective homomorphism.
Then the condition in Lemma \ref{gauss.cond} is satisfied.
Let $e:R:=\Q[\mu_l,\mu_N]\to E$ be a projection such that $\ker(e)\supset\ker(e_0)$.
Let $\chi:R\to \ol\Q$ be a homomorphism factoring through $e$.
Fix integers $k,n$ such that
\[
\chi(\zeta_1,\zeta_2)=\zeta_1^k\zeta_2^n,\quad \forall(\zeta_1,\zeta_2)\in\mu_l\times\mu_N.
\]
Note $\mathrm{gcd}(n,N)=1$ as $e_0:\mu_N \to E_0^\times$ is injective.
Let
\begin{equation}\label{HG-sect3-eq1}
\beta_1^\chi:=\left\{\frac{-na}{N}\right\},\quad
\beta_2^\chi:=\left\{\frac{-nb}{N}\right\},\quad
\alpha_1^\chi:=0,\quad
\alpha^\chi_2:=1-\beta^\chi_1-\beta^\chi_2
\end{equation}
which do not depend on the choice of $n$.
Then $e^{2\pi i\alpha_j^\chi}$ (resp. $e^{2\pi i\beta_j^\chi}$)
are eigenvalues of the local monodromy $T_0$ at $t=0$ (resp. $T_\infty$ at $t=\infty$)
on $R^1f_*\C(\chi)\cong\C^2$ (see \eqref{HG-sect1-eq3}).
The relative 1-form $\omega:=t\omega_n$
satisfies the conditions {\bf P1}, {\bf P2} in \cite{a-o-3} \S 4.5 :
\begin{enumerate}
\item[{\bf P1}]
$\int_{\gamma_t} \omega$ $(\gamma_t\in H_1(X_t))$
is spanned by $tF(\beta^\chi_1,\beta^\chi_2,1;1-t)$ and
$tF(\beta^\chi_1,\beta^\chi_2,\beta^\chi_1+\beta^\chi_2;t)$.
(This follows from \eqref{HG-sect1-eq2}).
\item[{\bf P2}]
$\omega\in \vg(\P^1\setminus\{\infty\},f_*\Omega^1_{X/\P^1}(\log D))$.
(This is Lemma \ref{HG-sect3-lem1}).
\end{enumerate}
We thus can apply the {\it regulator formula} (\cite{a-o-3} Thm.5.9).
In our particular case, it is stated as follows (the notation is slightly changed 
for the use in below).
\begin{thm}\label{a-o-log}
Let $e_0,e,\chi$ be as above, and let $\alpha_i^\chi$, $\beta_j^\chi$ be as in 
\eqref{HG-sect3-eq1}.
Assume that
$k/l,k/l-\beta_1^\chi,k/l-\beta_2^\chi,k/l-\beta_1^\chi-\beta_2^\chi\not\in\Z$.
Put
\[
\cF_1(\lambda):=(1-\lambda)^{k/l-1}
{}_3F_2\left({1,1,1-k/l\atop 2-\beta_1^\chi,2-\beta_2^\chi};(1-\lambda)^{-1}\right),
\]
\[
\cF_2(\lambda):=(1-\lambda)^{k/l-1}
{}_3F_2\left({1,1,2-k/l\atop 2-\beta_1^\chi,2-\beta_2^\chi};(1-\lambda)^{-1}\right).
\]
Let $\rho({}^t\chi)$ be the ${}^t\chi$-part of  the map $\rho$ in \eqref{HG-sect1-eq6}.
Let
\[
\rho({}^t\chi)=(\phi_1(\lambda),\phi_2(\lambda))\in (\O^{an})^{\op2}
\cong \O^{an}\ot W_2\cH_\dR({}^t\chi)
\] 
be a local lifting where the above isomorphism is with respect to $\ol\Q$-frame
of $W_2\cH_\dR({}^t\chi)$.
Define 
rational functions $E_i^{(r)}=E_i^{(r)}(\lambda)\in\Q(\lambda)$ for $r\in\Z_{\geq-1}$
in the following way.
Write $a:=2-\beta_1^\chi$, $b:=2-\beta_2^\chi$. Put
\[
A(s):=\frac{s(a+b+2s-3-s(1-\lambda)^{-1})}{(a+s-1)(b+s-1)},\quad
B(s):=\frac{s(1-s)(1-(1-\lambda)^{-1})}{(a+s-1)(b+s-1)}
\]
Define $C_i(s)$ and $D_i(s)$ by
\[
\begin{pmatrix}
C_{i+1}(s)\\
D_{i+1}(s)
\end{pmatrix}
=
\begin{pmatrix}
A(s)&1\\
B(s)&0
\end{pmatrix}
\begin{pmatrix}
C_i(s+1)\\
D_i(s+1)
\end{pmatrix},
\quad \begin{pmatrix}
C_{-1}(s)\\D_{-1}(s)
\end{pmatrix}
:=\begin{pmatrix}0\\1\end{pmatrix}.
\]
Then $E_i^{(r)}$ is given by
\begin{equation}\label{HG-sect3-eq2}
E_1^{(r)}=\lambda C_r(k/l)+(1-\lambda)C_{r+1}(k/l),\quad 
E_2^{(r)}=\lambda D_r(k/l)+(1-\lambda)D_{r+1}(k/l).
\end{equation}
Then for infinitely many integers $r>0$, we have
\begin{align*}
\phi_1(\lambda)&\equiv C_1(1-\lambda)^r
[E_1^{(r)}(\lambda)\cF_1(\lambda)+
E_2^{(r)}(\lambda)\cF_2(\lambda)],\\
\phi_2(\lambda)&\equiv C_2(1-\lambda)^{r-1}
[E_1^{(r-1)}(\lambda)\cF_1(\lambda)+
E_2^{(r-1)}(\lambda)\cF_2(\lambda)]
\end{align*}
modulo $\ol{\Q(\lambda)}$ with some $C_1,C_2\in\ol\Q^\times$.
\end{thm}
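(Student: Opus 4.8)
The plan is to deduce both expansions directly from the general regulator formula \cite{a-o-3} Theorem 5.9, specialized to our HG fibration of Gauss type, and then to put the resulting transcendental primitive into closed hypergeometric form. First I would verify that the hypotheses of that theorem hold here: the relative form $\omega=t\omega_n$ satisfies {\bf P1} and {\bf P2} (the former coming from the period expression \eqref{HG-sect1-eq2}, the latter being Lemma \ref{HG-sect3-lem1}), while the standing assumption $k/l,\,k/l-\beta_1^\chi,\,k/l-\beta_2^\chi,\,k/l-\beta_1^\chi-\beta_2^\chi\notin\Z$ guarantees that the underlying hypergeometric connection is irreducible with parameters in general position and that the extension \eqref{HG-sect1-eq5} is the one to which the formula applies. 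Granting this, \cite{a-o-3} Theorem 5.9 expresses the two de Rham components of $\rho({}^t\chi)$ as a transcendental primitive of $\omega$ paired against the period functions of {\bf P1}, and the remaining work is computational.

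For the explicit evaluation I would start from the Gauss--Manin connection in the frame $\{\omega_n,\eta_n\}$ computed in the proof of Lemma \ref{HG-sect3-lem1},
\[
(\nabla\omega_n,\nabla\eta_n)=dt\ot(\omega_n,\eta_n)\begin{pmatrix}
0&\beta^\chi_2/t\\
\beta^\chi_1/(1-t)&-(\beta^\chi_1+\beta^\chi_2)/t
\end{pmatrix}.
\]
After the substitutions $a=2-\beta_1^\chi$, $b=2-\beta_2^\chi$ and the change of variable relating $t$ to $(1-\lambda)^{-1}$, the entries $A(s),B(s)$ are read off from this connection, and the transfer matrix $\left(\begin{smallmatrix}A(s)&1\\B(s)&0\end{smallmatrix}\right)$ together with the shift $s\mapsto s+1$ becomes the step-up operator describing how the de Rham representative of the regulator changes between consecutive recursion levels. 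Iterating this operator from the base data $(C_{-1},D_{-1})=(0,1)$ produces, at each level $r$, the rational coefficients $E_i^{(r)}$ of \eqref{HG-sect3-eq2}, whereas the fixed base functions $\cF_1,\cF_2$ of type ${}_3F_2\!\left({1,1,q\atop a,b};\cdot\right)$ carry the two numerator parameters $q=1-k/l$ and $q=2-k/l$; the prefactor $(1-\lambda)^{k/l-1}$ is the multivalued contribution of the semistable monodromy at $t=1$.

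The reason the identities hold only modulo $\ol{\Q(\lambda)}$ is structural: the regulator class lives in $\vg(\wt{S}^{an},h^*\cJ(e))$, that is, modulo the image of $a^{-1}F^2W_2\cH_\dR\oplus\iota^{-1}W_2\cH_B$, so any two local liftings differ by an element of $\ol{\Q(\lambda)}$. The recursion yields, at each level $r$, one such lifting; the crucial self-consistency to check is that advancing $r$ by one alters the expression only by a rational function, i.e.\ that the step-up operator preserves the regulator class modulo $\ol{\Q(\lambda)}$. Once the formula is established at one sufficiently large level and this compatibility is verified, it propagates to infinitely many $r$, which is the form in which the theorem is stated; the appearance of $\phi_1$ and $\phi_2$ at consecutive levels $r$ and $r-1$, with the extra factor $(1-\lambda)$, reflects that $\omega=t\omega_n$ carries one more power of $t$ than $\omega_n$.

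The main obstacle will be the matching in the middle step: identifying the abstract iterated primitive furnished by \cite{a-o-3} Theorem 5.9 with the closed combination $E_1^{(r)}\cF_1+E_2^{(r)}\cF_2$, and in particular checking that the infinite series produced by the primitive resum to the ${}_3F_2$'s with precisely the parameters $2-\beta_1^\chi,\,2-\beta_2^\chi$ and argument $(1-\lambda)^{-1}$. This amounts to a careful bookkeeping of the contiguity relations encoded by the transfer matrix and a verification that the two recursion branches reproduce both components of $\rho({}^t\chi)$; I expect this to be the most delicate and error-prone part of the argument.
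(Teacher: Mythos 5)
Your proposal takes essentially the same route as the paper: the paper's entire proof of Theorem \ref{a-o-log} consists of checking the hypotheses of \cite{a-o-3} Theorem 5.9 --- namely that $\omega=t\omega_n$ satisfies {\bf P1} (which follows from \eqref{HG-sect1-eq2}) and {\bf P2} (which is Lemma \ref{HG-sect3-lem1}), together with the nonintegrality assumptions --- and then quoting that theorem, which is already stated there in the explicit form given here, with only a change of notation. Consequently the ``delicate matching'' you flag as the main obstacle, identifying the abstract primitive with $E_1^{(r)}\cF_1+E_2^{(r)}\cF_2$, is not part of the argument at all: the recursion via $A(s),B(s)$ and the closed ${}_3F_2$ expressions are supplied by the cited theorem itself rather than rederived from the Gauss--Manin connection.
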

We note that $(N,l,k,n,a,b)$ in Theorem \ref{a-o-log} can run over the set of all
pairs of integers satisfiyng
\begin{itemize}
\item $0<a,b<N$, $\gcd(N,a,b)=1$ and $\gcd(n,N)=1$,
\item
$k/l,k/l-\beta_1^\chi,k/l-\beta_2^\chi,k/l-\beta_1^\chi-\beta_2^\chi\not\in\Z$
(see \eqref{HG-sect3-eq1} for definition of $\beta_j^\chi$).
\end{itemize}

\section{Proof of Main Theorem}\label{proof-sect}
We are now in a position to prove Theorem \ref{log} (Log Formula).

\medskip

There are formulas
\begin{align*}
(b_1-1){}_3F_2\left({a_1,a_2,a_3\atop b_1-1,b_2};x\right)
&=\left(b_1-1+x\frac{d}{dx}\right)
{}_3F_2\left({a_1,a_2,a_3\atop b_1,b_2};x\right),\\
a_1\cdot\,{}_3F_2\left({a_1+1,a_2,a_3\atop b_1,b_2};x\right)
&=\left(a_1+x\frac{d}{dx}
\right)
{}_3F_2\left({a_1,a_2,a_3\atop b_1,b_2};x\right),
\end{align*}
\begin{align*}
(a_2-b_1)(a_1-b_1)(a_3-b_1){}_3F_2\left({a_1,a_2,a_3\atop b_1+1,b_2};x\right)
&=\theta_1\left({}_3F_2\left({a_1,a_2,a_3\atop b_1,b_2};x\right)\right),\\
(a_1-b_1)(a_1-b_2){}_3F_2\left({a_1-1,a_2,a_3\atop b_1,b_2};x\right)
&=\theta_2\left({}_3F_2\left({a_1,a_2,a_3\atop b_1,b_2};x\right)\right),
\end{align*}
where
\begin{align*}
\theta_1&:=-a_1a_2a_3+(a_2-b_1)(a_1-b_1)(a_3-b_1)\\
&\qquad +b_1(b_2+(b_1-a_1-a_2-a_3-1)x)\frac{d}{dx}+b_1(x-x^2)\frac{d^2}{dx^2}
\\
\theta_2&:=
(a_1-b_1)(a_1-b_2)-a_2a_3x\\
&\qquad+((b_1+b_2-a_1)-(a_2+a_3+1)x)x\frac{d}{dx}
 +(1-x)x^2\frac{d^2}{dx^2}.
\end{align*}
Therefore if one can show the log formula for ${}_3F_2\left({1,1,q\atop a,~b};x\right)$
then one immediately has the log formula for ${}_3F_2\left({n_1,n_2,q+n_3\atop a+n_4,~b+n_5};x\right)$
for arbitrary integers $n_1,n_2>0$ and $n_3,n_4,n_5\in\Z$.

\medskip

We keep the setting and the notation in \S \ref{HG-sect3}.
Suppose that
\begin{equation}\label{reg-log-1}
1=2\{-sk/l\}+\sum_{i=1}^2\{s\beta^\chi_2\}-\{s(\beta^\chi_i-k/l)\},\quad \forall s\in\hat{\Z}^\times.
\end{equation}
Then it follows from Corollary \ref{HI} 
that $W_2\cH(e)$ is a Tate HdR structure of type $(1,1)$.
Let us look at the map $\rho({}^t\chi)$ in Theorem \ref{a-o-log}.
This turns out to be the Beilinson regulator by the diagram \eqref{HG-sect1-eq4-b1}.
Since $W_2\cH(e)$ is Tate, it is generated by the divisor classes of 
the geometric generic fiber $X_{\ol\eta}$ of $f_{\wt{S}}$.
This implies that the image of $\reg$ in \eqref{HG-sect1-eq4-b1} is generated by
the images of $H^1_{\cM}(\wt{D}_i,\Q(1))$ where $D_i$ runs over
the generators of the Neron-Severi group $\NS(X_{\ol\eta})\ot\Q$ and 
$\wt{D}_i\to D_i$ is the desingularization.
As is well-known, $H^1_{\cM}(\wt{D}_i,\Q(1))\cong \ol\eta^\times\ot\Q$ as $\wt{D}_i$
is smooth projective, and the Beilinson regulator on it is given by the logarithmic function.
Therefore we have
\begin{equation}\label{reg-log-2}
\phi_1(\lambda),\,\phi_2(\lambda)\in \ol{\Q(\lambda)}+\ol{\Q(\lambda)}\log\ol{\Q(\lambda)}^\times.
\end{equation}
We now apply Theorem \ref{a-o-log}.
If one can show
\[
\begin{vmatrix}
E_1^{(r)}& E_2^{(r)}\\
E_1^{(r-1)}&E_2^{(r-1)}
\end{vmatrix}\ne0
\]
for almost all $r>0$, then this implies
$\cF_i(\lambda)\in \ol{\Q(\lambda)}+\ol{\Q(\lambda)}\log\ol{\Q(\lambda)}^\times$,
which finishes the proof of Theorem \ref{log}.
To do this, recall \eqref{HG-sect3-eq2}.
Letting 
\[
E_1^{(r)}(s):=\lambda C_r(s)+(1-\lambda)C_{r+1}(s),\quad 
E_2^{(r)}(s):=\lambda D_r(s)+(1-\lambda)D_{r+1}(s),
\]
we want to show
\begin{equation}\label{reg-log-3}
\begin{vmatrix}
E_1^{(r)}(k/l)& E_2^{(r)}(k/l)\\
E_1^{(r-1)}(k/l)&E_2^{(r-1)}(k/l)
\end{vmatrix}\ne0
\end{equation}
for almost all $r>0$.
Since
\[
\begin{pmatrix}
E_1^{(r+1)}(s)& E_1^{(r)}(s)\\
E_2^{(r+1)}(s)&E_2^{(r)}(s)
\end{pmatrix}=
\begin{pmatrix}
A(s)&1\\
B(s)&0
\end{pmatrix}
\begin{pmatrix}
E_1^{(r)}(s+1)& E_1^{(r-1)}(s+1)\\
E_2^{(r)}(s+1)&E_2^{(r-1)}(s+1)
\end{pmatrix}
\]
\eqref{reg-log-3} is reduced to show
\[
\begin{vmatrix}
E_1^{(0)}(k/l+r)& E_2^{(0)}(k/l+r)\\
E_1^{(-1)}(k/l+r)&E_2^{(-1)}(k/l+r)
\end{vmatrix}\ne0
\]
for any integers $r$.
However this follows by 
\begin{align*}
\begin{vmatrix}
E_1^{(0)}(s)& E_2^{(0)}(s)\\
E_1^{(-1)}(s)&E_2^{(-1)}(s)
\end{vmatrix}&=
\begin{vmatrix}
\lambda+(1-\lambda)A(s)&
(1-\lambda) B(s)\\
1-\lambda&\lambda
\end{vmatrix}\\
&=\lambda\frac{(a-1)(b-1)\lambda+s(a+b-2)}{(s+a-1)(s+b-1)},\quad (a:=2-\beta^\chi_1,\,
b:=2-\beta^\chi_2)
\end{align*}
and the fact $\beta_i^\chi\not\in\Z$ (see \eqref{HG-sect3-eq1})
and $k/l-\beta_i^\chi\not\in\Z$ as is assumed.
This completes the proof of Theorem \ref{log}.

\end{document}